\newtheorem{thm}{Theorem}[section]
\newtheorem{lem}[thm]{Lemma}
\newtheorem{prop}[thm]{Proposition}
\newtheorem{cor}[thm]{Corollary}
\newtheorem{conj}[thm]{Conjecture}
\theoremstyle{definition}
\newtheorem{df}[thm]{Definition}
\newtheorem{ex}[thm]{Example}
\theoremstyle{remark}
\newtheorem{rem}[thm]{Remark}
\newtheorem*{acknowledgement*}{Acknowledgement}
\def\calM{{\mathcal M}}
\def\frakm{{\mathfrak m}}
\def\frakn{{\mathfrak n}}
\def\NN{{\mathbb N}}
\def\QQ{{\mathbb Q}}
\def\ZZ{{\mathbb Z}}
\def\rmr{\mathrm{r}}
\def\rms{\mathrm{s}}
\def\Ap{\operatorname{Ap}}
\def\gr{\operatorname{gr}}
\def\H{\operatorname{H}} 
\def\Ht{\operatorname{ht}}
\def\lcm{\operatorname{lcm}}
\def\length{\operatorname{length}}
\def\LM{\operatorname{LM}} 
\def\minord{\operatorname{min-ord}}
\def\Quot{\operatorname{Quot}}
\def\spoly{\operatorname{spoly}}
\DeclareMathOperator{\ord}{ord}
\def\divides{\operatorname{|}}
\def\Index#1{\index{#1}\emph{#1}}
\def\Label#1{\label{#1}}
\def\Set#1{\left\{ #1\right \}}
\def\ideal#1{\langle #1 \rangle}
\def\isom{\cong}
\def\onto{\twoheadrightarrow}
\def\bd1{\mathbf{1}}
\def\LC{\operatorname{LC}} 
\def\LM{\operatorname{LM}} 
\def\LT{\operatorname{LT}} 
\def\Apery{Ap\'ery}
\title[Tangent Cone of Numerical Semigroup Rings]{Tangent Cone of Numerical Semigroup Rings of Embedding Dimension Three}
\author[Yi-Huang Shen]{Yi-Huang Shen}
\address{Department of Mathematics, University of Science and Technology of China, Hefei, Anhui, 230026, China} \email{yhshen@ustc.edu.cn}
\subjclass{Primary 13A30, Secondary 13P10, 13H10}
\keywords{Numerical Semigroup Rings; Tangent Cone; Cohen-Macaulayness; Buchsbuamness}
\begin{document}

\begin{abstract}
   In this paper, we give new characterizations of the Buchsbaum and Cohen-Macaulay properties of the tangent cone $\gr_\frakm(R)$, where $(R,\frakm)$ is a numerical semigroup ring of embedding dimension $3$. In particular, we confirm the conjectures raised by Sapko on the Buchsbaumness of $\gr_\frakm(R)$.
\end{abstract}

\maketitle

\section{Introduction}
Throughout this paper let $\NN$ denote the set $\Set{0,1,2,\cdots}$. A \Index{numerical semigroup} $G$ generated by $n_1,\dots,n_d \in \NN$ is the set $\Set{\sum_{i=1}^n a_i n_i\mid  a_i \in \NN}$. It is a subsemigroup of $\NN$. For simplicity, we always assume that $G$ is minimally generated by these generators with $ n_1 < \dots < n_d$ and $\gcd(n_1, \ldots, n_d) = 1$, unless stated otherwise. Let $K$ be a field and $t$ an indeterminate over $K$. As a subring of the power series ring $V= K[ [t]]$, the ring $R=K[ [t^{n_1},\ldots,t^{n_d}]]$ is the \Index{numerical semigroup ring} associated to $G$  with $\frakm = (t^{n_1}, \ldots, t^{n_d})R$ being the unique maximal ideal. In this case, $d$ is the embedding dimension of $R$ and $n_1$ is the Hilbert-Samuel multiplicity of $R$ with respect to $\frakm$.

The numerical semigroup ring $R$ is a natural homomorphic image of the power series ring $S=K[ [x_1,\dots,x_d]]$. The kernel $I$ of this surjection is a binomial ideal (cf.~\citet[Corollary 7.3]{MR741678}) and it will be referred to as the \Index{defining ideal} of $R$.

Let $C$ be the monomial curve having the parametrization
\[
x_1=t^{n_1}, x_2=t^{n_2},\dots,x_d=t^{n_d}.
\]
To study the tangent cone of $C$ at the origin, it is very natural to take a closer look at the initial form ideal $I^*$ of the defining ideal $I$, which is the kernel of the natural homomorphism between the associated graded rings
\[
\gr_\frakn(S)=\bigoplus_{i=0}^\infty \frakn^i/\frakn^{i+1} \onto \gr_\frakm(R)=\bigoplus_{i=0}^\infty \frakm^i/\frakm^{i+1}.
\]
Here $\frakn$ and $\frakm$ are the maximal ideals of $S$ and $R$ respectively. The initial form ideal $I^*$ can be computed from $I$, for instance, by using the method in \citet[Section 15.10.3]{MR1322960}. We will also refer to $\gr_\frakm(R)$ as the \Index{tangent cone} of $R$.

Every two-generated numerical semigroup has a principal defining ideal. Hence, the first non-trivial example arises in the case when the embedding dimension $d=3$. In this situation, the defining ideal $I$ is always three-generated (cf.~ \citet{MR0269762}). Furthermore, \citet{MR632652} and \citet{MR578272} independently proved that $\gr_\frakm(R)$ is Cohen-Macaulay if and only if the minimal number of generators $\mu(I^*)\le 3$. We are able to provide an additional equivalent characterization in terms of reduction number and index of nilpotency. Recall that the ideal $Q=(t^{n_1})R$ is a principal reduction of the maximal ideal $\frakm$, with reduction number $r_Q(\frakm)=\min\Set{r\mid Q\frakm^r=\frakm^{r+1}}$ and index of nilpotency $s_Q(\frakm)=\min\Set{s\mid \frakm^{s+1}\subseteq Q}$.
It follows easily from the definition that $\rmr_Q(\frakm)\ge \rms_Q(\frakm)$, with equality when $\gr_\frakm(R)$ is Cohen-Macaulay (cf.~\citet[Corollary 2.7]{MR514892}). On the other hand, it is not very difficult to see that $\rmr_Q(\frakm)=\rms_Q(\frakm)$ in general will not lead to the Cohen-Macaulayness of $\gr_\frakm(R)$.  However, three-generated numerical semigroups turn out to be very special. We will prove in Theorem \ref{CM-cond-2} that when $R$ is a numerical semigroup ring of embedding dimension $3$, $\gr_\frakm(R)$ is Cohen-Macaulay if and only if $\rmr_Q(\frakm)=\rms_Q(\frakm)$.

We also study the Buchsbaum and 2-Buchsbaum properties in terms of the 0-th local cohomology modules. Let $\calM=\bigoplus_{i=1}^\infty \frakn^i/\frakn^{i+1}$ be the homogeneous maximal ideal of $\gr_\frakn(S)$. Then $\gr_\frakm(R)$ is said to be \Index{$k$-Buchsbaum} if $\calM^k \cdot \H_\calM^0(\gr_\frakm(R)) =0$. Normally, $1$-Buchsbaum will simply be referred to as Buchsbaum. It is evident that if $\length(\H_\calM^0(\gr_\frakm(R))\le 1$, then $\gr_\frakm(R)$ is Buchsbaum, and if $\length(\H_\calM^0(\gr_\frakm(R))\le 2$, then $\gr_\frakm(R)$ is 2-Buchsbaum. Interestingly enough, the converses are also true in this case.  We prove in Theorem \ref{length1} and \ref{length2}, that when $R$ is a numerical semigroup ring of embedding dimension $3$, the associated graded ring $\gr_\frakm(R)$ is Buchsbaum if and only if $\length(\H_\calM^0(\gr_\frakm(R)))\le 1$, and  $\gr_\frakm(R)$ is 2-Buchsbaum if and only if $\length(\H_\calM^0(\gr_\frakm(R)))\le 2$, respectively.
In particular, the three conjectures concerning the Buchsbaumness of the tangent cone $\gr_\frakm(R)$, raised by \citet{MR1855122}, are confirmed.

In \citet{MR2256395} and \citet{DANNA2009}, the authors introduced several invariants for the numerical semigroup $G$. Using these invariants, they gave various sufficient and/or necessary conditions for the tangent cone $\gr_\frakm(R)$ to be Cohen-Macaulay or Buchsbaum. As an application of our treatment, we will show in Theorem \ref{Buch-Another-Char}, that the sufficient condition in \citet[Theorem 3.8]{DANNA2009} is also necessary for the tangent cone $\gr_\frakm(R)$ to be Buchsbaum, under the further assumption that the embedding dimension $d=3$.

The main technique in this paper is to manipulate the standard basis of the defining ideal $I$. The standard basis algorithm can generate a standard basis from a binomial minimal generating set of $I$.  When the embedding dimension $d$ is small, it is possible to carry out the standard basis algorithm by hand. This approach turns out to be very useful for the investigation of the tangent cone when the embedding dimension is three. We will go over briefly related theory in the next section.

\section{Preliminaries}
\Label{IFI-Section}

Let us begin by explaining several key ingredients of the numerical semigroup $G=\ideal{n_1,\dots,n_d}$.  Recall that we assume $\gcd(n_1,\dots,n_d)=1$. Hence for every integer $g\gg 0$, we have $g\in G$. The integer $f=\max\Set{z\in\ZZ \mid z\not\in G}$ is called the \Index{Frobenius number} of $G$.  Let $e$ be a nonzero element in $G$. The \Index{{\Apery} set} of $G$ with respect to $e$ is $\Ap(G,e)=\Set{w_0,\cdots,w_{e-1}}$, where $w_i$ is the smallest element in $G$ congruent to $i$ modulo $e$. Sometimes we write the elements of $\Ap(G,e)$ in  increasing order: $\widetilde{w}_0=0 < \widetilde{w}_1< \cdots < \widetilde{w}_{e-1}=e+f$. The following lemma gives an important characterization of Gorenstein numerical semigroup rings.

\begin{lem}
    [{\cite{MR0017942,MR0414559,MR0265353}}]
    \Label{Symm}
    Let $G=\ideal{n_1,\dots,n_d}$ be a numerical semigroup and $R$ be its associated numerical semigroup ring. Furthermore, let $e=n_1$ be the multiplicity of $R$ and $f$ be the Frobenius number of $G$. Then the following conditions are equivalent.
\begin{enumerate}[a]
    \item The numerical semigroup ring $R$ is Gorenstein.
    \item The numerical semigroup $G$ is symmetric in the sense that for every $z\in \ZZ$, $z\in G$ if and only if $f-z \not\in G$.
    \item $\widetilde{w}_i+\widetilde{w}_{e-1-i}=\widetilde{w}_{e-1}$ for every integer $i$ such that $0\le i \le e-1$.
\end{enumerate}
\end{lem}

Recall that $\NN=\Set{0,1,2,\dots}$. For every $z\in G$, we have $z=\sum_i a_i n_i$ for some $a_i\in\NN$. We will frequently refer to such a linear combination as a  \Index{representation} of $z$ with respect to $G$. The integer $\sum_i a_i$ is called the \Index{length} of this representation. It is obvious that
$
\ord_\frakm(t^z)=\max\Set{\sum a_i \mid \sum a_i n_i =z, a_i \in \NN}.
$
When there is no confusion, we also write this number as $\ord_G(z)$ and similarly define $\minord_G(z)$ to be $\min\Set{\sum a_i \mid \sum a_i n_i =z, a_i \in \NN}$.
The ratio $\frac{\ord_G(z)}{\minord_G(z)}$ is called the \emph{elasticity of $z$ with respect to $G$}. We say $z=\sum_i a_i n_i$ is a \Index{maximal representation} of $z$ with respect to $G$ if $\sum a_i =\ord_G(z)$.

The semigroup $G$ can be equipped with a partial order $\leqq_G$: for $a,b\in G$, we write $a \leqq_G b$ if $b-a\in G$. This order relation was considered, for instance, in \citet{MR2105325}. Another important partial order is
$\preceq_G$: for $a,b\in G$, we write $a\preceq_G b$ if there exists an element $c$ in $G$ such that $a+c=b$ and $\ord_G(a)+\ord_G(c)=\ord_G(b)$.
The partial order $\preceq_G$ in this formulation was suggested by Lance Bryant.

\begin{lem}[{\citet[Corollary 3.20]{LanceBryant}}]
    \Label{Lance-thm}
    Let $(R,\frakm)$ be a Gorenstein numerical semigroup ring associated to a semigroup $G=\ideal{n_1,\dots,n_d}$, and assume that the associated graded ring $\gr_\frakm(R)$ is Cohen-Macaulay. Then $\gr_\frakm(R)$ is Gorenstein if and only if the following condition holds for $e=n_1$:
   \begin{align}
    \tag{\dag} \label{dag}
    w_i \preceq_G w_{e-1}\text{ for all } w_i\in \Ap(G,n_1).
  \end{align}
\end{lem}

\begin{rem} \Label{ela1}
  In the previous lemma, if the numerical semigroup $G$ is symmetric and the elasticity of $w_{e-1}$ with respect to $G$ is 1, then every representation of $w_{e-1}$ is maximal. Hence the condition \eqref{dag} holds automatically.
\end{rem}

The following remark to Lemma \ref{Lance-thm} will be useful for the proof of Theorem \ref{CM-cond-2}.

\begin{rem}
    \label{gor-rem}
    Let $G=\ideal{n_1,\dots,n_d}$ be a symmetric numerical semigroup  and $R$ its associated numerical semigroup ring. Suppose $\frakm$ is the maximal ideal of $R$ with a principal reduction $Q=(t^{n_1})R$.  If the condition \eqref{dag} holds and $\rms_Q(\frakm)=\rmr_Q(\frakm)$, then $\gr_\frakm(R)$ is Gorenstein. We do not need to assume that $\gr_\frakm(R)$ is Cohen-Macaulay in advance. For the proof, see \citet[Theorem 3.14]{LanceBryant}.
\end{rem}

For a Gorenstein numerical semigroup ring, the index of nilpotency $\rms_Q(\frakm)$ can be computed by using the $\frakm$-adic order of $w_{e-1}$.

\begin{lem}
    \Label{lm03}
    Let $(R,\frakm)$ be a Gorenstein numerical semigroup ring associated to the semigroup $G=\ideal{n_1,\dots,n_d}$, and let $f$ denote the Frobenius number of $G$. Then for the principal reduction $Q=(t^{n_1})R$ of $\frakm$, $\rms_Q(\frakm)=\ord_G({f+n_1})$.
\end{lem}

\begin{proof}
    We always have \[
    \rms_Q(\frakm)=\max\Set{\ord_G(w)\mid 0\ne w\in \Ap(G,n_1)}.
    \]
    When $G$ is symmetric, the maximum is obviously achieved at $\ord_G({f+n_1})$.
\end{proof}

By convention, let $S=K[ [x_1,\dots,x_d]]$ be a power series ring over a field $K$ and $\frakn$ be its maximal ideal. This ring maps naturally onto the numerical semigroup ring $R=K[ [t^{n_1},\dots,t^{n_d}]]$. For each nonzero  element $x\in S$, let $o=\ord_\frakn(x)<\infty$ be the $\frakn$-adic order of $x$. We denote by $x^*$ the residue class of $x$ in $\frakn^o/\frakn^{o+1}$ and call it the \Index{initial form} of $x$. The \Index{initial form ideal} $I^*\subseteq \gr_\frakn(S)$ is generated by $x^*$ for all $x\in I$, and $\gr_\frakm(R)\isom \gr_\frakn(S)/I^*$ canonically. For our numerical semigroup ring $R$, the radical of the initial ideal $I^*$ is very simple.

\begin{lem}
    \Label{Radical}
    $\sqrt{I^*}=\ideal{x_2^*,\ldots,x_d^*}\gr_\frakn(S) $.
\end{lem}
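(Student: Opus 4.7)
The plan is to prove the two inclusions $\sqrt{I^*} \supseteq (x_2^*, \ldots, x_d^*)$ and $\sqrt{I^*} \subseteq (x_2^*, \ldots, x_d^*)$ separately. Note first that $\gr_\frakn(S) \cong k[x_1^*, \ldots, x_d^*]$ is a polynomial ring, and the quotient by $(x_2^*, \ldots, x_d^*)$ is the integral domain $k[x_1^*]$; hence $(x_2^*, \ldots, x_d^*)$ is prime and therefore radical. So it suffices to show that $x_i^* \in \sqrt{I^*}$ for each $i \geq 2$, and that $I^* \subseteq (x_2^*, \ldots, x_d^*)$.

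For the first assertion, for each $i \geq 2$ I would exhibit an explicit element of $I$ whose initial form is a power of $x_i^*$. The natural candidate is the binomial $x_i^{n_1} - x_1^{n_i}$, which lies in $I$ since both monomials map to $t^{n_1 n_i}$ in $R$. Because $n_1 < n_i$, the $\frakn$-adic orders satisfy $\ord_\frakn(x_i^{n_1}) = n_1 < n_i = \ord_\frakn(x_1^{n_i})$, so the initial form is $(x_i^*)^{n_1} \in I^*$, and thus $x_i^* \in \sqrt{I^*}$.

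For the second inclusion I would argue by contradiction. Suppose $f \in I$ with $f^* \notin (x_2^*, \ldots, x_d^*)$, and set $k = \ord_\frakn(f)$. Then the homogeneous form $f^*$ of degree $k$ has a nonzero contribution of the form $c\,(x_1^*)^k$, which means we may write $f = c\,x_1^k + h$ with $c \neq 0$ and $h \in \frakn^{k+1}$. Substituting yields $0 = f(t^{n_1}, \ldots, t^{n_d}) = c\,t^{k n_1} + h(t^{n_1}, \ldots, t^{n_d})$ in $R$. But every monomial $x_1^{a_1} \cdots x_d^{a_d}$ of $\frakn$-degree at least $k+1$ maps to $t^{\sum a_j n_j}$ with $\sum a_j n_j \geq (k+1)\,n_1 > k n_1$, using that $n_1 = \min_j n_j$. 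Hence $c\,t^{k n_1}$ is the unique term of lowest $t$-degree and cannot be cancelled, contradicting $f \in I$.

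The only step requiring any care is the $t$-order comparison in the second part, which depends crucially on $n_1$ being the \emph{minimum} of the generators; were $n_1$ not the smallest, a monomial of strictly higher $\frakn$-order could in principle map to a $t$-monomial of equal or lower order and cause cancellation. Beyond this bookkeeping point, there is no real obstacle.
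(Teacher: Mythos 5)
Your first inclusion is exactly the paper's argument: the binomials $x_i^{n_1}-x_1^{n_i}\in I$ with $n_1<n_i$ have initial form $(x_i^*)^{n_1}\in I^*$, so $(x_2^*,\dots,x_d^*)\subseteq\sqrt{I^*}$. For the reverse inclusion you diverge from the paper, which argues by dimension: $\Ht(x_2^*,\dots,x_d^*)=\Ht I^*=\Ht I=d-1$ and $(x_2^*,\dots,x_d^*)$ is prime, so the two radicals coincide. Your route is a direct $t$-adic order count proving the stronger containment $I^*\subseteq(x_2^*,\dots,x_d^*)$ itself, with no appeal to height; it is more elementary and makes explicit where the hypothesis $n_1=\min_j n_j$ enters. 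Both approaches are legitimate.

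One step of your argument needs repair. From $f^*\notin(x_2^*,\dots,x_d^*)$ you write $f=c\,x_1^k+h$ with $h\in\frakn^{k+1}$; this is false in general, because $f^*$ may contain \emph{other} monomials of degree exactly $k$ (those divisible by some $x_j$ with $j\ge 2$), and these lie in $\frakn^k\setminus\frakn^{k+1}$. Your order estimate covers only monomials of $\frakn$-order at least $k+1$, so these degree-$k$ monomials are left unhandled and the claimed uniqueness of the lowest-order term does not yet follow. The fix is immediate and uses the same inequality: a degree-$k$ monomial $x_1^{a_1}\cdots x_d^{a_d}\ne x_1^k$ has $\sum_j a_j=k$ and $a_j>0$ for some $j\ge 2$, hence maps to $t^m$ with $m\ge (k-1)n_1+n_j>kn_1$. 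With that case added, $c\,t^{kn_1}$ is indeed the unique term of lowest $t$-order and the contradiction stands.
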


\begin{proof}
    Consider the binomials $f_i:=x_i^{n_1}-x_1^{n_i}\in I$ for $2\le i \le d$. Since $n_1<n_i$, the initial form of $f_i^*$ is $(x_i^{*})^{n_1}\in I^*$. Hence $\ideal{x_2^*,\ldots,x_d^*}\subseteq \sqrt{I^*}$. Since $\Ht (\ideal{x_2^*,\dots,x_d^*}\gr_\frakn(S))=\Ht (I^*) =\Ht(I)=d-1$ and $\ideal{x_2^*,\ldots,x_d^*}$ is a prime ideal in $\gr_\frakn(S)$, $\sqrt{I^*}=\ideal{x_2^*,\ldots,x_d^*} \gr_\frakn(S)$.
\end{proof}

Since $\gr_\frakn(S)\isom K[x_1,\dots,x_d]$, for ease of notation,  we will simply write $x_i^*$ as $x_i$ for $1\le i \le d$ when there is no confusion.

Now we recall briefly some of the concepts and facts related to monomial orders and standard bases. Readers who are unfamiliar with these topics may wish to consult \citet{MR2363237}.

\begin{df}
Let $T=K[\underline{X}]=K[x_1,\dots,x_d]$ be a polynomial ring over a field $K$.
\begin{enumerate}[a]
\item A total order $>_\tau$ on the set of monomials $\Set{\underline{X}^\alpha\mid \alpha \in \NN^d}\subseteq T$ is a \Index{monomial order} if
$\underline{X}^\alpha >_\tau \underline{X}^\beta \implies \underline{X}^{\alpha+\gamma} >_\tau \underline{X}^{\beta+\gamma}$ for any $\alpha, \beta ,\gamma \in \NN^d$.

\item A monomial order $>_\tau$ is a \Index{local order} if $1>_\tau \underline{X}^\alpha$ for all $\alpha\ne \underline{0}\in \NN^d$; it is a \Index{global order} or \Index{term order} if $1<_\tau \underline{X}^\alpha$ for all $\alpha\ne \underline{0}\in \NN^d$.

    \item A local monomial order $>_\tau$ is \Index{degree compatible} if $\deg({{\underline{X}}}^\alpha)< \deg({{\underline{X}}}^\beta) \implies {{\underline{X}}}^\alpha >_\tau {{\underline{X}}}^\beta$ for any $\alpha, \beta \in \NN^d$.
\end{enumerate}
\end{df}

The negative degree reverse lexicographic order that we shall introduce here is a very useful monomial order. It is local and degree compatible.

\begin{df}
A monomial order $>_{ds}$ such that
\begin{align*}
    {{\underline{X}}}^\alpha >_{ds} {{\underline{X}}}^\beta \Longleftrightarrow
    \Big\{\deg({{\underline{X}}}^\alpha) < \deg ({{\underline{X}}}^\beta)
    \text{ or }\Big(\deg ({{\underline{X}}}^\alpha) = \deg({{\underline{X}}}^\beta) \text{ and }
    \exists 1\le i \le d: \\
    \alpha(n)=\beta(n),\dots,\alpha(i+1)=\beta(i+1),\alpha(i)<\beta(i)\Big)
    \Big\},
\end{align*}
is called a \Index{negative degree reverse lexicographic order} on $T=K[x_1,\dots,x_d]$.
\end{df}

Fix a monomial order $>_\tau$ on $T$. For a nonzero polynomial $f=\sum c_\alpha \underline{X}^\alpha$, the \Index{leading monomial} of $f$ is $\LM(f):=\max_{>_\tau}\Set{\underline{X}^\alpha\mid c_\alpha\ne 0}$. When $\LM(f)=\underline{X}^\alpha$, we call $\LC(f):=c_\alpha$ the \Index{leading coefficient} of $f$ and $\LT(f):=c_\alpha \underline{X}^\alpha$ the \Index{leading term} of $f$.
For two nonzero polynomials $f$ and $g$ in $T$, the $s$-polynomial is defined as follows:
\[
\spoly(f,g):=\frac{\lcm(\LM(f),\LM(g))}{\LT(f)} f - \frac{\lcm(\LM(f),\LM(g))}{\LT(g)}g.
\]

A finite set $B\subseteq I$ is a \Index{standard basis} of an ideal $I\subseteq T$ if for any nonzero $f\in I$, there exists an element $g\in B$ satisfying $\LM(g) \divides \LM(f)$. The famous Buchberger's criterion (cf.~ \citet[Theorem 1.7.3]{MR2363237}) says that a generating set $B=\Set{g_1,\dots,g_t}$ of $I$ is a standard basis if and only there exist $c_{ijk}\in T$ such that for all $i$ and $j$, $s(g_i,g_j)=\sum_{k} c_{ijk} g_k$ and $\LM(c_{ijk}g_k) <_\tau \LM(s(g_i,g_j))$ when $c_{ijk}\ne 0$. With a global monomial order, a standard basis $B$ can always be generated from a generating set $B_0=\Set{g_1,\dots,g_{t'}}$ of $I$ by applying the standard basis algorithm (cf.~\citealt[Section 1.7]{MR2363237}). Roughly speaking, this algorithm extends the generating set $B_0$ to the standard basis $B$ by successively adding nonzero $s$-polynomials $\spoly(g_i,g_j)$. Since new generators will also be needed for calculating the $s$-polynomials,
when working with a local monomial order, the standard basis algorithm might not terminate in finite steps. Nevertheless, this is not a problem when dealing with the defining ideal of a numerical semigroup ring. The finiteness is guaranteed by the algorithm described in Section 15.10.3 of \citet{MR1322960} with a global monomial order. The algorithm in \citet{MR1322960} uses  a homogenization technique and has been implemented in the package {\tt TangentCone} of {\tt Macaulay2} \cite{M2}. Furthermore, one can remove the redundancies in the standard basis obtained here and arrives at a reduced standard basis which is uniquely determined, see Definition 1.6.2 and Exercise 1.6.1 of \citet{MR2363237} for clarity.

From now on, fix a numerical semigroup $G=\ideal{n_1,\dots,n_d}$.

\begin{df}
  For distinct $\alpha=(\alpha(1),\dots,\alpha(d))$ and $\beta=(\beta(1),\dots,\beta(d))\in \NN^d$, the binomial $f={{\underline{X}}}^{\alpha}-{{\underline{X}}}^\beta \in T$ is \Index{weakly balanced} with respect to $G$  if $\sum_i \alpha(i) n_i = \sum_i \beta(i) n_i$. The binomial $f$ is called \Index{balanced} if it is weakly balanced, $\deg({{\underline{X}}}^\alpha)=\deg({{\underline{X}}}^\beta)$, and ${{\underline{X}}}^\alpha$ and ${{\underline{X}}}^\beta$ are coprime.
\end{df}

For the numerical semigroup ring $R$, the defining ideal $I$ is generated by weakly balanced binomials (cf.~\citet[Corollary 7.3]{MR741678}). If we fix a degree compatible local monomial order  and apply the standard basis algorithm (cf.~\citet[Section 1.7]{MR2363237}) to this generating set, we are able to obtain a reduced standard basis $\Set{f_1,\dots,f_s}$. In this case, the initial form ideal $I^*$ is minimally generated by the corresponding initial forms:
\[
I^*=\ideal{f_1^*,\dots,f_s^*}\gr_\frakn(S).
\]
Since each $f_i$ is also a weakly balanced binomial, $f_i^*$ is either a monomial or a balanced binomial. In the latter case, roughly speaking, $f_i=f_i^*$. In the rest of this paper, when we say that $g$ is a \Index{minimal generator} of $I^*$, it is understood that $g\in \Set{f_1^* ,\dots,f_s^*}$ when the minimal binomial generating set of $I$ and the monomial order is clear.

Our next task is to choose a suitable monomial order $>_\tau$ for $K[x_1,\dots,x_d]$.

\begin{df}
  A local monomial order $>_\tau$ is \Index{nice} in the variable $x_i$ if the following holds:
\[
\left\{
\deg ({{\underline{X}}}^\alpha) < \deg ({{\underline{X}}}^\beta) \text{ or } \left({{\underline{X}}}^\alpha-{{\underline{X}}}^\beta \text{ is balanced, } \beta(i)>0 \right)
\right\}
\implies {{\underline{X}}}^\alpha >_\tau {{\underline{X}}}^\beta.
\]
\end{df}

Being nice is really a mild condition. For instance, the following monomial order is nice in $x_1$:
\begin{align*}
    {{\underline{X}}}^\alpha > {{\underline{X}}}^\beta \stackrel{def}{\Longleftrightarrow}
    \Big\{\deg({{\underline{X}}}^\alpha) < \deg ({{\underline{X}}}^\beta)
    \text{ or }\Big(\deg ({{\underline{X}}}^\alpha) = \deg({{\underline{X}}}^\beta) \text{ and }
    \exists 1\le i \le d: \\
    \alpha(1)=\beta(1),\dots,\alpha(i-1)=\beta(i-1),\alpha(i)<\beta(i)\Big)
    \Big\}.
\end{align*}
When $d=3$, the negative degree reverse lexicographic order is also nice in $x_1$.

\begin{df}
  Let $K[x_1,\dots,x_d]$ be a polynomial ring with a degree compatible local monomial order $>_\tau$ and $G=\ideal{n_1,\dots,n_d}$ the underlying numerical semigroup. An ideal $J\subseteq K[x_1,\dots,x_d]$ is called \Index{almost balanced} if it satisfies the following two conditions:
\begin{enumerate}[a]
    \item $\sqrt{J}=(x_2,\dots,x_d)$;
    \item there is a reduced standard basis $\Set{f_1,\dots,f_t}$ of $J$ such that  $f_i$ is either a monomial or a balanced binomial.
\end{enumerate}
\end{df}

For a numerical semigroup ring $(R,\frakm)$ and its defining ideal $I$, it is clear that the initial form ideal $I^*$ is almost balanced. The following lemma is crucial when discussing the Cohen-Macaulayness of $\gr_\frakm(R)$.

\begin{lem}
    \Label{CM-cond-1}
    Let $>_\tau$ be a local monomial order for $T=K[x_1,\dots,x_d]$ that is nice in $x_1$,  and $J$ an almost balanced $T$-ideal. Suppose $\Set{f_1,\dots,f_t}$ forms a reduced standard basis of $I$ as in the previous definition. Then $T/J$ is Cohen-Macaulay if and only if for any $f_i$, either $f_i$ is binomial or $x_1$ does not divide $f_i$.
\end{lem}

\begin{proof}
  Observe that $J$ is homogeneous. Hence for the homogeneous maximal ideal $\frakm=\ideal{x_1,\dots,x_d}$ of $T$,  $T/J$ is Cohen-Macaulay if and only if $(T/J)_\frakm$ is Cohen-Macaulay, if and only if $x_1$ is a regular element on $(T/J)_\frakm$.

   If some $f_i=x_1 {{\underline{X}}}^{\alpha}$, then ${{\underline{X}}}^\alpha \not\in J$ since $\Set{f_1,\dots,f_s}$ is a reduced standard basis. Therefore $J$ is not a perfect ideal.

    Conversely, suppose that $x_1 f\in J_\frakm$ and $0\ne f\not\in J_\frakm$. By multiplying suitable unit element in $T_\frakm$, we may assume that $f\in T$.  Notice that $x_1\LM({f})=\LM(x_1{f})$ is divisible by some $\LM(f_i)$. But $\LM({f})$ is not divisible by this $\LM(f_i)$, hence $\LM(f_i)$ is divisible by $x_1$. Since the monomial order $>_\tau$ is nice in $x_1$, $f_i$ cannot be a balanced binomial. Hence it is a monomial.
\end{proof}

\begin{ex}\Label{exam243}
    Let $K$ be a field, $R=K[ [t^5,t^6,t^{13}]]$ and $\frakm=(t^5,t^6,t^{13})R$. Then the defining ideal is
    \[
    I=(x_2^2x_3-x_1^5,x_3^2-x_1^4x_2,x_1x_3-x_2^3) \subseteq K[ [ x_1,x_2,x_3]].
    \]
    With respect to the negative degree reverse lexicographic order, the set
    \[
    \Set{x_2^2x_3-x_1^5,x_3^2-x_1^4x_2,x_1x_3-x_2^3,x_2^5-x_1^6}
    \]
    forms a reduced standard basis of $I$. Thus the initial form ideal is
    \[
    I^*=(x_2^2x_3,x_3^2,x_1x_3,x_2^5,)\subseteq K[x_1,x_2,x_3].
    \]
    Since the generator $x_1x_3$ is divisible by $x_1$, $\gr_\frakm(R)$ is not Cohen-Macaulay. This non-Cohen-Macaulay property also follows immediately from the fact that $I^*$ is generated by more than 3 elements.
\end{ex}

The $\alpha$-invariants of the numerical semigroup $G$ will also be needed in our investigation.

\begin{df}
    \Label{alpha-inv}
    For the numerical semigroup $G=\ideal{n_1,\cdots ,n_d}$, define
 \[
 \alpha_i = \min\Set{\alpha\in \NN \mid \alpha n_i \in \ideal{n_1,\dots,\widehat{n_i}, \dots,n_d}, \alpha\ne 0}
\]
for $1\le i \le d$.  Here for the ``truncated'' semigroup $\ideal{n_1,\dots,\widehat{n_i}, \dots,n_d}$, we do not require that $\gcd\Set{n_j\mid j\ne i}=1$.
\end{df}

In Example \ref{exam243}, we have $\alpha_1=5$, $\alpha_2=3$ and $\alpha_3=2$.

\section{When the Embedding Dimension $d=3$}

In this section, we will always use the negative degree reverse lexicographic order on $\gr_\frakn(S)\isom K[x_1,x_2,x_3]$. Hence if $f=x_2^b -x_1^a x_3^c$ is a balanced binomial with respect to the numerical semigroup $G=\ideal{n_1,n_2,n_3}$, then the leading monomial of $f$ is $x_2^b$.

The basis of the initial form ideal $I^*$ will be constructed as in Section \ref{IFI-Section} from the binomial basis given in the following fundamental theorem for three-generated numerical semigroups.

\subsection{Fundamental Theorem}

\begin{thm}
    [{\cite{MR0269762}}]
    \Label{herzog-thm}
    Let $R$  be a numerical semigroup ring corresponding to the numerical semigroup $G=\ideal{n_1,n_2,n_3}$. Then for the $\alpha$-invariants $\alpha_i$ as defined in Definition \ref{alpha-inv}, and suitable numbers $\alpha_{ij}\in \NN$ where $1\le i,j\le 3$, the following conditions hold.
    \begin{enumerate}[a]
        \item If $R$ is Gorenstein, then, after a permutation $(i,j,k)$ of $(1,2,3)$, the defining ideal is
            \[
            I=(x_i^{\alpha_i}-x_j^{\alpha_j}, x_k^{\alpha_k}-x_i^{\alpha_{ki}}x_j^{\alpha_{kj}}),
            \]
            and the Frobenius number of $G$ is
            \[
            f=(\alpha_i-1) n_i + (\alpha_k-1) n_k -n_j.
            \]
        \item If $R$ is not Gorenstein, then
            \[
            I=(x_1^{\alpha_1}-x_2^{\alpha_{12}}x_3^{\alpha_{13}}, x_2^{\alpha_2}-x_1^{\alpha_{21}}x_3^{\alpha_{23}}, x_3^{\alpha_3}-x_1^{\alpha_{31}}x_2^{\alpha_{32}}),
            \]
            where $\alpha_i=\alpha_{ji}+\alpha_{ki}$ for all permutation $(i,j,k)$ of $(1,2,3)$. Furthermore, each $\alpha_{ij}>0$ for $1\le i\ne j \le 3$.
    \end{enumerate}
\end{thm}

The invariants appeared in this theorem have been studied extensively, for instance, in \citet{MR2385494} and \citet{MR2105325}. Now, applying Lemma \ref{CM-cond-1} to Theorem \ref{herzog-thm}, one can quickly give arithmetic conditions for $\gr_\frakm(R)$ to be Cohen-Macaulay.

\begin{cor} Resume the notation from Theorem \ref{herzog-thm}.
    \Label{d3cm}
    \begin{enumerate}[a]
        \item If $I=(x_1^{\alpha_1}-x_2^{\alpha_2}, x_3^{\alpha_3}-x_1^{\alpha_{31}}x_2^{\alpha_{32}})$, then $\gr_\frakm(R)$ is a complete intersection and $I^*$ is generated by $\Set{x_2^{\alpha_2},x_3^{\alpha_3}}$.
        \item If $I=(x_1^{\alpha_1}-x_3^{\alpha_3}, x_2^{\alpha_2}-x_1^{\alpha_{21}}x_3^{\alpha_{23}})$, then $\gr_\frakm(R)$ is Cohen-Macaulay if and only if $\alpha_2 \le \alpha_{21}+\alpha_{23}$. When $\gr_\frakm(R)$ is Cohen-Macaulay, $I^*$ is generated by $ \Set{x_3^{\alpha_3},(x_2^{\alpha_2}-x_1^{\alpha_{21}}x_3^{\alpha_{23}})^*}$.

        \item If $I=(f_1:=x_2^{\alpha_2}-x_3^{\alpha_3}, f_2:=x_1^{\alpha_1}-x_2^{\alpha_{12}}x_3^{\alpha_{13}})$, we can always assume that $\alpha_{13}< \alpha_3$. Then $\gr_\frakm(R)$ is Cohen-Macaulay if and only if $\alpha_2+\alpha_{12}\le \alpha_1 + \alpha_3 -\alpha_{13}$.  Set $f_3:=x_2^{\alpha_2+\alpha_{12}}-x_1^{\alpha_{1}}x_3^{\alpha_3-\alpha_{13}}$, the $s$-polynomial of $f_1$ and $f_2$. When $\gr_\frakm(R)$ is Cohen-Macaulay, $I^*$ is generated by $\Set{x_3^{\alpha_3},x_2^{\alpha_{12}}x_3^{\alpha_{13}},f_3^*}$.

        \item If $I=(x_1^{\alpha_1}-x_2^{\alpha_{12}}x_3^{\alpha_{13}}, x_2^{\alpha_2}-x_1^{\alpha_{21}}x_3^{\alpha_{23}}, x_3^{\alpha_3}-x_1^{\alpha_{31}}x_2^{\alpha_{32}})$, then $\gr_\frakm(R)$ is Cohen-Macaulay if and only if $\alpha_2 \le \alpha_{21}+\alpha_{23}$. When $\gr_\frakm(R)$ is Cohen-Macaulay, $I^*$ is generated by $\Set{x_2^{\alpha_{12}}x_3^{\alpha_{13}}, (x_2^{\alpha_2}-x_1^{\alpha_{21}}x_3^{\alpha_{23}})^*, x_3^{\alpha_3}}$.
    \end{enumerate}
\end{cor}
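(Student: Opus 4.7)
The plan is to use Theorem \ref{herzog-thm} to fix the explicit form of $I$ in each case, run the Mora standard basis algorithm with the negative degree reverse lexicographical ordering, and then apply Lemma \ref{CM-cond-1}: namely, $\gr_\frakm(R)=T/I^*$ is Cohen-Macaulay if and only if the resulting minimal standard basis of $I$ has no monomial generator divisible by $x_1$. The key observation is that for a weakly balanced binomial $x^\alpha - x^\beta$ under this ordering, the initial monomial is dictated by the total degree: the smaller-degree term wins, with reverse-lex tiebreaking in the equal-degree case favoring the term with lower $x_3$-exponent. Since $n_1 < n_2 < n_3$, the leading monomials of the Herzog generators can be read off directly, and the only nontrivial case-split hinges on the single degree comparison appearing in the statement.

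For case (a), $\LM(f_1) = x_2^{\alpha_2}$ (as $\alpha_1 > \alpha_2$) and $\LM(f_2) = x_3^{\alpha_3}$ (as $\alpha_3 < \alpha_{31}+\alpha_{32}$); since they are coprime, $\{f_1,f_2\}$ is already a standard basis, $I^* = (x_2^{\alpha_2}, x_3^{\alpha_3})$ is generated by a regular sequence, and no generator is divisible by $x_1$, so $T/I^*$ is even a complete intersection. Case (b) is similar: $\LM(f_1) = x_3^{\alpha_3}$, while $\LM(f_2)$ equals $x_2^{\alpha_2}$ (or, at equality, the balanced binomial $f_2^*$ itself) when $\alpha_2 \le \alpha_{21}+\alpha_{23}$, and $x_1^{\alpha_{21}}x_3^{\alpha_{23}}$ otherwise. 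In the first sub-case the leading monomials are coprime, yielding a Cohen-Macaulay $I^*$ with the listed generators; in the second, $\LM(f_2)$ is divisible by $x_1$, so Cohen-Macaulayness fails by Lemma \ref{CM-cond-1}.

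For case (c), the crucial s-polynomial is $f_3 := \spoly(f_1, f_2) = x_2^{\alpha_2 + \alpha_{12}} - x_1^{\alpha_1} x_3^{\alpha_3 - \alpha_{13}}$ (computed using $\alpha_{13} < \alpha_3$, which is given). Comparing $\alpha_2+\alpha_{12}$ to $\alpha_1+\alpha_3-\alpha_{13}$: under the ``$\le$'' inequality, $\LM(f_3) = x_2^{\alpha_2+\alpha_{12}}$; one then checks that $\spoly(f_1, f_3)$ and $\spoly(f_2, f_3)$ reduce to $0$ modulo $\{f_1,f_2,f_3\}$, so this set is a minimal standard basis and the claimed three generators of $I^*$ are all free of $x_1$. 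Under strict inequality in the opposite direction, $\LM(f_3)$ is divisible by $x_1$ and Cohen-Macaulayness fails. Case (d) parallels (b): the Herzog identities $\alpha_2 = \alpha_{12}+\alpha_{32}$ and $\alpha_3 = \alpha_{13}+\alpha_{23}$ give $\LM(f_1) = x_2^{\alpha_{12}}x_3^{\alpha_{13}}$ and $\LM(f_3) = x_3^{\alpha_3}$ (neither divisible by $x_1$), while $\LM(f_2)$ depends in the same way on the inequality $\alpha_2 \le \alpha_{21}+\alpha_{23}$.

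The main obstacle is the ``only if'' direction in cases (b), (c), and (d): after identifying a monomial generator of $I^*$ divisible by $x_1$, one must verify that this generator cannot be eliminated by passing to some other minimal standard basis. Concretely, one needs to check that the initial monomial $x_1^{\alpha_{21}}x_3^{\alpha_{23}}$ (or its analogue in case (c)) is not divisible by any other initial monomial produced by Mora's algorithm, which follows once one controls the numerical inequalities coming from Herzog's identities $\alpha_i = \alpha_{ji}+\alpha_{ki}$ and the hypothesis $n_1<n_2<n_3$. The ``if'' direction additionally requires verifying that all remaining s-polynomial reductions terminate cleanly, producing exactly the claimed standard basis.
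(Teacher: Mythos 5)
Your proposal is correct and follows exactly the route the paper intends for this corollary (which it leaves as an immediate consequence of the preceding machinery): read off the generators from Theorem \ref{herzog-thm}, compute leading monomials under a negative degree ordering that is nice in $x_1$, run Mora's algorithm through the single nontrivial s-polynomial, and invoke Lemma \ref{CM-cond-1} to detect Cohen--Macaulayness via monomial generators divisible by $x_1$. The degree comparisons you isolate (e.g.\ $\alpha_2+\alpha_{12}$ versus $\alpha_1+\alpha_3-\alpha_{13}$ in case (c)) are precisely the arithmetic conditions in the statement, and your closing caveats about minimality of the offending monomial generator and termination of the algorithm are handled by the same routine divisibility checks the paper relies on elsewhere.
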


\subsection{Cohen-Macaulayness}
The purpose of this subsection is to establish a new characterization for the Cohen-Macaulayness of $\gr_\frakm(R)$ when the embedding dimension $d=3$. Notice that the ideal $Q=(t^{n_1})R$ is a principal reduction of the maximal ideal $\frakm$. We want to connect the Cohen-Macaulay property with the reduction number $\rmr_Q(\frakm)=\min\Set{r\mid Q\frakm^r=\frakm^{r+1}}$ and the index of nilpotency $\rms_Q(\frakm)=\min\Set{s\mid \frakm^{s+1}\subseteq Q}$. It follows easily from the definition that $\rmr_Q(\frakm)\ge \rms_Q(\frakm)$. When $\gr_\frakm(R)$ is Cohen-Macaulay, a result of \citet[Corollary 2.7]{MR514892} implies that $\rmr_Q(\frakm)=\rms_Q(\frakm)$. On the other hand, it is not very difficult to see that $\rmr_Q(\frakm)=\rms_Q(\frakm)$ in general will not lead to the Cohen-Macaulayness of $\gr_\frakm(R)$. However, it is different for a numerical semigroup ring of embedding dimension $3$.

\begin{thm}
    \Label{CM-cond-2}
    Suppose $(R,\frakm)$ is a numerical semigroup ring of embedding dimension 3, and $Q=(t^{n_1})R$ is a principal reduction of the maximal ideal $\frakm$. The tangent cone $\gr_\frakm(R)$ is Cohen-Macaulay if and only if the index of nilpotency $\rms_Q(\frakm)$ equals the reduction number $\rmr_Q(\frakm)$.
\end{thm}

\begin{proof}
  Let $G=\ideal{n_1,n_2,n_3}$ be the associated numerical semigroup.  Since the  ``only if'' part is clear, we may assume that $\rms_Q(\frakm)=\rmr_Q(\frakm)$ and proceed to show that $\gr_\frakm(R)$ is Cohen-Macaulay.  For every $x\in S=K[ [x_1,x_2,x_3]]$, we will write $\overline{x}$ for its image in $R=S/I$, where $I$ is the defining ideal. The integer $e$ will be the multiplicity $n_1$.

\begin{enumerate}[1]
    \item First, we study the case when $G$ is symmetric, i.e., the numerical semigroup ring $R$ is a complete intersection.  Now, for the {\Apery} set element $w_{e-1}=f+n_1$ where $f$ is the Frobenius number of $G$, $\ord_G(w_{e-1})=\rms_Q(\frakm)$ by Lemma \ref{lm03}.  Using the same notation as in Theorem \ref{herzog-thm}, we have three cases.

    \begin{enumerate}[i]
        \item When $(i,j,k)=(1,2,3)$, the tangent cone is automatically a complete intersection.
        \item Suppose $(i,j,k)=(1,3,2)$. Now the last {\Apery} set element can be written as
            \[
            w_{e-1}=(\alpha_2-1)n_2+(\alpha_3-1)n_3
            \]
            by part (a) of Theorem \ref{herzog-thm}.  This is obviously the unique representation of $w_{e-1}$ with respect to $G$. It follows from Remark \ref{ela1} that the condition \eqref{dag} holds. When $\rmr_Q(\frakm)=\rms_Q(\frakm)$, $\gr_{\frakm}(R)$ is Gorenstein by Remark \ref{gor-rem}.
          \item Suppose $(i,j,k)=(2,3,1)$. We may again assume that $\alpha_{13}<\alpha_3$. Now the Frobenius number can be written as
            \[
            f=(\alpha_2+\alpha_{12}-1)n_2 + (\alpha_{13}-1)n_3 -n_1,
            \]
            therefore
            \[
            w_{e-1}=(\alpha_2+\alpha_{12}-1)n_2 + (\alpha_{13}-1)n_3.
            \]
            This gives the maximal representation of $w_{e-1}$ with respect to $G$, and $\rms_Q(\frakm)=\ord_\frakm(w_{e-1}) =(\alpha_2+\alpha_{12}-1)+(\alpha_{13}-1)$ by Lemma \ref{lm03}.  In this case, the tangent cone $\gr_\frakm(R)$ is Cohen-Macaulay if and only if $\alpha_2 + \alpha_{12}\le \alpha_1 + \alpha_3-\alpha_{13}$ by Corollary \ref{d3cm}(c).  For this subcase, we want to prove that the following conditions are equivalent:
            \begin{enumerate}[a]
                \item $\gr_\frakm(R)$ is Cohen-Macaulay.
                \item $\rmr_Q(\frakm)=\rms_Q(\frakm)$.
                \item $\rmr_Q(\frakm) \le \alpha_1 + \alpha_3 -2$.
            \end{enumerate}

(a) $\Rightarrow$ (b): It is clear.

(b) $\Rightarrow$ (c): For $r=\rmr_Q(\frakm)$, $\overline{x_2^{\alpha_2+\alpha_{12}-1}x_3^{\alpha_{13}-1}}\in \frakm^r$. So $\overline{x_2^{\alpha_2+\alpha_{12}}x_3^{\alpha_{13}-1}}\in \frakm^{r+1}=Q\frakm^r$. But
            \[
            \overline{x_2^{\alpha_2+\alpha_{12}}x_3^{\alpha_{13}-1}} =\overline{x_2^{\alpha_{12}}x_3^{\alpha_3 + \alpha_{13}-1}}=\overline{x_1^{\alpha_1}x_3^{\alpha_3-1}},
            \]
            and $\overline{x_1}$ is a regular element in the domain $R$. Hence $\overline{x_1^{\alpha_1-1}x_3^{\alpha_3-1}}\in \frakm^r$. We want to show that $\ord_\frakm(\overline{x_1^{\alpha_1-1}x_3^{\alpha_3-1}})=(\alpha_1-1)+(\alpha_3-1)$, so that (c) holds.  It suffices to show that $(\alpha_1-1)n_1+(\alpha_3-1)n_3$ is the unique representation of this element with respect to $G$. Suppose not, then
            \[
            (\alpha_1-1)n_1+(\alpha_3-1)n_3=an_1+bn_2+cn_3,
            \]
            with $a,b,c\in \NN$ and $b>0$. By the minimality of $\alpha_1$ and $\alpha_3$, one must have $a\le \alpha_1-1$ and $c\le \alpha_3-1$. Now
            \[
            (\alpha_1-1-a)n_1+(\alpha_3-1-c)n_3 =bn_2
            \]
            Since $b>0$,  $b\ge \alpha_2$ by the minimality of $\alpha_2$. Hence
            \[
            (\alpha_1-1-a)n_1+(\alpha_3-1-c)n_3 = (b-\alpha_2)n_2+ \alpha_3 n_3,
            \]
            thus
            \[
            (\alpha_1-1-a)n_1 = (b-\alpha_2)n_2+ (c+1) n_3,
            \]
            which contradicts the minimality of $\alpha_1$. This shows that (b) implies (c).

        (c) $\Rightarrow$ (a): We have $\alpha_2+\alpha_{12}+\alpha_{13}-2 = \rms_Q(\frakm)\le \rmr_Q(\frakm)\le \alpha_1 + \alpha_3 -2$. It follows immediately that  $\alpha_2 + \alpha_{12}\le \alpha_1 + \alpha_3 -\alpha_{13}$. Hence $\gr_Q(\frakm)$ is Cohen-Macaulay and (a) holds.

    \end{enumerate}

\item Next, we consider the case when the semigroup group $G$ is not symmetric. Recall that the defining ideal is
    \[
    I=(f_1:=x_1^{\alpha_1}-x_2^{\alpha_{12}}x_3^{\alpha_{13}}, f_2:=x_2^{\alpha_2}-x_1^{\alpha_{21}}x_3^{\alpha_{23}}, f_3:=x_3^{\alpha_3}-x_1^{\alpha_{31}}x_2^{\alpha_{32}}).
    \]
    Our aim is to show that if $\rms_Q(\frakm)=\rmr_Q(\frakm)$, then $\alpha_2\le \alpha_{21}+\alpha_{23}$.
    First of all, with the partial order $\leqq_G$ we introduced in Section 2, we have
    \[
        \max_{\leqq_{G}} \Ap(G,n_1)= \Set{(\alpha_2-1)n_2 + (\alpha_{13}-1)n_3, (\alpha_3-1)n_3 + (\alpha_{12}-1)n_2},
    \]
    from \citet[Lemma 4]{MR2105325}.  Therefore, the index of nilpotency is
\[
      \rms_Q(\frakm)=\max\Set{\alpha_2+\alpha_{13}-2,\alpha_3+\alpha_{12}-2},
\]
    by a proof similar to that of Lemma \ref{lm03}. Now we are ready to complete the proof.
    \begin{enumerate}[i]
        \item The case when $\rms_Q(\frakm)=\alpha_2+\alpha_{13}-2$ is easy. Suppose the condition is satisfied, i.e., $\alpha_2+{\alpha_{13}}-2=r=\rmr_Q(\frakm)$. Then $\overline{x_2^{\alpha_{2}}x_3^{\alpha_{13}-1}}\in \frakm^{r+1}=Q\frakm^r$. Notice that $\overline{x_2^{\alpha_{2}}x_3^{\alpha_{13}-1}}=\overline{x_1^{\alpha_{21}}x_3^{\alpha_3-1}}$. Hence $\overline{x_1^{\alpha_{21}-1}x_3^{\alpha_3-1}}\in \frakm^r$. Similar to the  Gorenstein case, one can show that the representation $z=(\alpha_{21}-1)n_1+(\alpha_3-1)n_3\in G$ is unique, hence $\ord_\frakm(\overline{x_1^{\alpha_{21}-1}x_3^{\alpha_3-1}}) =\alpha_{21}+\alpha_3-2\ge r=s=\alpha_2+\alpha_{13}-2$. Thus $\alpha_2 \le \alpha_{21}+\alpha_3-\alpha_{13}=\alpha_{21}+\alpha_{23}$, and $\gr_\frakm(R)$ is Cohen-Macaulay.

        \item If $\rms_Q(\frakm)>\alpha_2+\alpha_{13}-2$ and $\rmr_Q(\frakm)=\rms_Q(\frakm)$, then let $\delta:=\alpha_{23}-\alpha_{32}>0$ and thus $\alpha_2+\alpha_{13}-2=r-\delta$. Now $\ord_\frakm(\overline{x_2^{\alpha_{2}-1}x_3^{\alpha_{13}-1}})\ge r-\delta$, hence $\overline{x_2^{\alpha_{2}+\delta}x_3^{\alpha_{13}-1}} \in\frakm^{r+1}=Q\frakm^r$. It follows easily that $\overline{x_1^{\alpha_{21}-1}x_2^\delta x_3^{\alpha_{3}-1}}\in \frakm^r$.  Suppose to the contrary that $\gr_\frakm(R)$ is not Cohen-Macaulay, then $\alpha_2=\alpha_{12}+\alpha_{32}>\alpha_{21}+\alpha_{23}$. Hence $\delta=\alpha_{23}-\alpha_{32}< \alpha_{12}-\alpha_{21}<\alpha_{12}$. We claim that the representation
            \[
            P: z=(\alpha_{21}-1)n_1+\delta n_2 +(\alpha_3-1)n_3\in G
            \]
            is maximal. Let
            \[
            Q: z=an_1+bn_2+cn_3
            \]
            be a distinct representation of $z$. Then we have $6$ cases when comparing the coefficients of $P$ and $Q$. The proof of the claim is straightforward and easy. To avoid unnecessary repetition, we just consider the exemplifying case where $\alpha_{21}-1\ge a$, $\delta < b$ and $\alpha_3-1\ge c$. Whence
            \[
            (b-\delta)n_2=(\alpha_{21}-1-a)n_1 + (\alpha_3-1-c)n_3.
            \]
            By the choice of $\alpha_2$, $b-\delta\ge \alpha_2$, hence
            \[
            (b-\delta-\alpha_2)n_2 = (\alpha_{21}-1-a-\alpha_{21})n_1 + (\alpha_3-1-c-\alpha_{23})n_{23}.
            \]
            Or equivalently
            \[
            (a+1)n_1+ (b-\delta-\alpha_2)n_2 = (\alpha_{13}-1-c)n_3.
            \]
            This implies that $0<\alpha_{13}-1-c<\alpha_3$, which is against the choice of $\alpha_3$. The argument for other cases is similar.
           Now $\ord_\frakm(\overline{x_1^{\alpha_{21}-1}x_2^\delta x_3^{\alpha_{3}-1}})=\delta+\alpha_{21}+\alpha_3-2\ge r=s=\alpha_2+\alpha_{13}-2+\delta$. Hence $\alpha_2 \le \alpha_{21}+\alpha_3-\alpha_{13}=\alpha_{21}+\alpha_{23}$, and $\gr_\frakm(R)$ is again Cohen-Macaulay.
    \end{enumerate}
\end{enumerate}
\end{proof}

We thank Lance Bryant for the helpful comments regarding Theorem \ref{CM-cond-2}.

\begin{ex}
    Let $K$ be a field, $R=K[ [t^7,t^{10},t^{25}]]$. Then $Q=(t^7)R$ is a principal reduction of the maximal ideal $\frakm=(t^7,t^{10},t^{25})R$. We have $\rmr_Q(\frakm)=\rms_Q(\frakm)=5$, hence $\gr_\frakm(R)$ is Cohen-Macaulay by Theorem \ref{CM-cond-2}.
\end{ex}

The statement of Theorem \ref{CM-cond-2} fails if the embedding dimension is 4.

\begin{ex}
  Let $R=K[ [t^9, t^{10}, t^{11}, t^{23}]]$, $Q=(t^9)R$ and $\frakm=(t^9, t^{10}, t^{11}, t^{23})R$. Then $R$ is Gorenstein and we have $\rms_Q(\frakm)=\rmr_Q(\frakm)=4$. But
  \[
  t^{34}=t^{11} t^{23} \in \frakm( (\frakm^6:_R\frakm^4)\cap \frakm)\setminus \frakm^3.
  \]
  Thus, by \citet[Corollary 2.3 and Remark 2.7]{DANNA2009}, $\gr_\frakm(R)$ is not even Buchsbaum.
\end{ex}

If the 1-dimensional local ring $R$ is not associated to any numerical semigroup, then the theorem might still fail, even when $R$ has embedding dimension 3. The prototype of the following example is due to Lance Bryant.

\begin{ex}
    The computer algebra system {\sc Singular} \cite{GPS} suggests that the ideal $I=(a^3+c^5+b^6,a^2b+ac^3+b^6)$ is a prime ideal in the polynomial ring $\QQ[a,b,c]$. Let $R=\QQ[ [a,b,c]]/IR$. The initial form ideal $I^*=(b^2c^5+ac^6, abc^5, a^2c^3, a^2b, a^3)$, hence $\QQ[a,b,c]/I^*$ is not Cohen-Macaulay. On the other hand, $Q=(b-c)R$ is a principal reduction of the maximal ideal $\frakm=(a,b,c)R$. It is not difficult to see that $\rmr_Q(\frakm)=\rms_Q(\frakm)=6$.
\end{ex}

\subsection{Buchsbaumness and $2$-Buchsbaumness}
Recall that for a one-dimensional standard graded ring $A$ with the unique homogeneous maximal ideal $\calM$, a finitely generated $A$-module $M$ is called \Index{$k$-Buchsbaum} if $\calM^k \cdot H_\calM^0(M)=0$. The 1-Buchsbaum condition is simply called \Index{Buchsbaum}, and 0-Buchsbaum modules are precisely the Cohen-Macaulay modules.

In this subsection, we will mainly investigate the Buchsbaum and $2$-Buchsbaum property of $\gr_\frakm(R)$, where $(R,\frakm)$ is a numerical semigroup ring of embedding dimension $3$. Denote the homogeneous maximal ideal of $\gr_\frakn(S)\isom K[x_1,x_2,x_3]$ by $\calM$. Since $\gr_\frakm(R)=\gr_\frakn(S)/I^*$, we will write the image of $f\in \gr_\frakn(S)$ in $\gr_\frakm(R)$ as $\overline{f}$. For the local cohomology module $H_\calM^0(\gr_\frakm(R))$, we can also replace $\calM$ by the homogeneous maximal ideal of $\gr_\frakm(R)$. Let $r$ be the reduction number of $\frakm$. Then it is not very difficult to show that $H_\calM^0(\gr_\frakm(R))=(0:_{\gr_\frakm(R)}\calM^r)$ (cf.~\citet[Lemma 2.2]{DANNA2009}). Therefore, $\gr_\frakm(R)$ is Buchsbaum if and only $H_\calM^0(\gr_\frakm(R))=(0:_{\gr_\frakm(R)}\calM)$.

Sapko investigated the tangent cone of numerical semigroup rings and made the following conjectures regarding the Buchsbaumness.
\begin{conj}
    [{\cite{MR1855122}}]
    Let $(R,\frakm)$ be a numerical semigroup ring of embedding dimension $3$.
    \begin{enumerate}[a]
        \item If $\gr_\frakm(R)$ is Buchsbaum, then the initial form ideal $I^*$ of $I$ is generated by 4 elements, and for some integer $k\ge 1$,
    \[
    (0:_{\gr_m(R)}\calM)=(\overline{x_3^k}) \gr_\frakm(R).
    \]
\item $\gr_\frakm(R)$ is Buchsbaum if and only if  $\length(H_\calM^0(\gr_\frakm(R)))\le 1$.
    \end{enumerate}
    \end{conj}

The main theme of this subsection is to confirm the above conjectures, and prove similar results when the tangent cone is 2-Buchsbaum.

\begin{lem}
    \Label{principal}
    Let $(R,\frakm)$ be a numerical semigroup ring of embedding dimension $3$. If $\gr_\frakm(R)$ is not Cohen-Macaulay and $\calM$ is the homogeneous maximal ideal of $\gr_\frakm(R)$, then the 0-th local cohomology module $H_\calM^0(\gr_\frakm(R))$ is principal and is generated by $\overline{x_3^\gamma}$ for suitable $\gamma\in \NN$.
\end{lem}

\begin{proof}
    Recall that given a degree-compatible local monomial order like the $>_{ds}$, the initial form ideal $I^*$ is generated by the initial forms of a binomial standard basis of $I$.
    Since $n_1<n_2<n_3$, $I^*$ is generated by forms of the following 4 types, with all visible exponents strictly positive:
    \begin{enumerate}[a]
        \item $x_3^{\alpha_3}$,
        \item $x_2^{\gamma_2}$ or a balanced binomial $x_2^{\gamma_2}-x_1^{\gamma_{21}}x_3^{\gamma_{23}}$,
        \item $x_1^{a}x_3^{c}$,
        \item $x_2^{b}x_3^{c}$.
    \end{enumerate}
    For any minimal generating set, there is exactly one generator of type (a). The same  is true for generators of type (b). To see this, it suffices to notice that if $x_2^{\gamma_2}-x_1^{\gamma_{21}}x_3^{\gamma_{23}}$ is balanced, then $x_2^{\gamma_2}$ is its leading monomial. On the other hand, there might be more than one generators of type (c) or (d).

    It follows from Lemma \ref{CM-cond-1} that $I^*$ is Cohen-Macaulay if and only if generators of type (c) do not exist. If $\gr_\frakm(R)$ is not Cohen-Macaulay, then $ H_\calM^0(\gr_\frakm(R))\ne 0$. We claim that this local cohomology module is generated by $\overline{x_3^\gamma}$ where
    \[
    \gamma = \min \Set{c \mid x_1^ax_3^c \text{ is a generator of $I^*$ of type (c) for some nonzero } a,c\in \NN}.
    \]
    Since $\sqrt{I^*}=(x_2,x_3)$, $\overline{x_3^\gamma} \in H_\calM^0(\gr_\frakm(R))$. On the other hand, $I^*+(x_3^\gamma)$ is (not necessarily minimally) generated by $x_3^\gamma$ together with the remaining generators of $I^*$ of type (b) or (d). This last ideal is Cohen-Macaulay by  Lemma \ref{CM-cond-1}. Hence the local cohomology module $H_\calM^0(\gr_\frakm(R))$ is generated by $\overline{x_3^\gamma}$.
\end{proof}

For a one-dimensional Cohen-Macaulay local ring $(R,\frakm)$, when $\length(H_\calM^0(\gr_\frakm(R)))\le 1$, one sees immediately that the associated graded ring $\gr_\frakm(R)$ is Buchsbaum. When $R$ is a numerical semigroup ring of embedding dimension $3$, the previous lemma implies that the converse is also true.

\begin{thm}
    \label{length1}
 Let $(R,\frakm)$ be a numerical semigroup ring of embedding dimension $3$.  Then $\gr_\frakm(R)$ is Buchsbaum if and only if $\length(H_\calM^0(\gr_\frakm(R)))\le 1$.
 \end{thm}

Next, we study the 2-Buchsbaumness of the tangent cone. When $\length(H_\calM^0(\gr_\frakm(R)))\le 2$, the associated graded ring $\gr_\frakm(R)$ is clearly $2$-Buchsbaum. We found that the converse is also true for a numerical semigroup ring of embedding dimension $3$.

\begin{thm}
    \Label{length2}
    Let $(R,\frakm)$ be a numerical semigroup ring of embedding dimension $3$. Then $\gr_\frakm(R)$ is 2-Buchsbaum if and only if
    $\length(H_\calM^0(\gr_\frakm(R)))\le 2.$
\end{thm}

\begin{proof}
    It suffices to assume that $\gr_\frakm(R)$ is 2-Buchsbaum, not Cohen-Macaulay, and investigate the length of the local cohomology module. Lemma \ref{principal} guarantees a monomial minimal generator $x_1^a x_3^c$ in $I^*$.  Since $x_1^2 x_3^c, x_3^{2+c} \in I^*$, we have $1\le a \le 2$ and $\alpha_3 -2 \le c \le \alpha_3-1$.

    We claim that there is exactly one  minimal monomial generator of $I^*$ having the form $x_1^a x_3^c$. It is easy to see that this could fail only when both $x_1x_3^{\alpha_3-1}$ and $x_1^2 x_3^{\alpha_{3}-2}$ are minimal generators of $I^*$. Since they are minimal, there exist $\beta_1,\beta_2\in \NN$ such that both $x_2^{\beta_1}-x_1 x_3^{\alpha_3-1}$ and $x_2^{\beta_2}-x_1^2 x_3^{\alpha_3-2}$ are weakly balanced binomials in $I$. Because $n_3> n_2 > n_1$, we must have $\beta_1> \beta_2$ and $x_2^{\beta_1-\beta_2}x_1=x_3$. Hence $G$ is two-generated, contradicting our assumption of $d=3$.

    Meanwhile, we notice that $x_2^2 x_3^c \in I^*$. Hence either $\alpha_2=2$ or this monomial is divisible by the leading monomial $x_2^b x_3^{c'}$ of a minimal generator of $I^*$ with $1\le b \le 2$ and $1\le c'\le c$. If $\alpha_2=2$, then Corollary \ref{d3cm} implies that $I^*$ is Cohen-Macaulay. Hence $\alpha_2> 2$ and, by an argument similar to that in the previous paragraph, there is exactly one minimal generator in $I^*$ having the form $x_2^bx_3^{c'}$ with $1\le b \le 2$ and $1\le c'\le c$.

    Now we are ready to show that $\length(H_\calM^0(\gr_\frakm(R)))\le 2$.
    \begin{enumerate}[a]
        \item Suppose that $x_1^a x_3^c=x_1 x_3^{\alpha_3-2}$. Notice that $x_2^2x_3^{\alpha_3-2},x_2x_3^{\alpha_3-1}\in I^*$. Each of them has to be divisible by some monomial minimal generator of $I^*$ of the form $x_2^b x_3^{c'}$ with $1\le b \le 2$, $1\le c'\le c$. But there is at most one such generator. Hence this generator must divide the $\gcd(x_2^2 x_3^{\alpha_3-2},x_2x_3^{\alpha_3-1})=x_2x_3^{\alpha_3-2}$. In particular, $x_2x_3^{\alpha_3-2}\in I^*$.  Consequently the vector space $H_\calM^0(\gr_\frakm(R))=(\overline{x_3^{\alpha_3-2}})\gr_\frakm(R)$ is generated by $\Set{x_3^{\alpha_3-2},x_3^{\alpha_3-1}}$.

        \item The case $x_1^a x_3^c=x_1^2 x_3^{\alpha_3-2}$ can never happen. Notice that the image of $x_3^{\alpha_3-2}$ generates the local cohomology module. Hence $x_1x_3 \cdot x_3^{\alpha_3-2}\in I^*$. We know that there cannot exist two distinct minimal generators of the form $x_1^\alpha x_3^\gamma$ in $I^*$. Since $x_1^2 x_3^{\alpha_3-2}$ is assumed to be a minimal generator, it has to divide $x_1 x_3^{\alpha_3-1}$, which is impossible.

        \item Assume that $x_1^a x_3^c=x_1 x_3^{\alpha_3-1}$. Notice that $x_3^{\alpha_3}\in I^*$. Hence the local cohomology module is generated as a vector space by $\Set{x_3^{\alpha_3-1}}$ or $\Set{x_3^{\alpha_3-1},x_2x_3^{\alpha_3-1}}$.

        \item Assume that $x_1^a x_3^c=x_1^2 x_3^{\alpha_3-1}$. We have $x_1x_2 x_3^{\alpha_3-1}\in I^*$ by the 2-Buchsbaumness. Since $x_1x_2x_3^{\alpha_3-1}$ is not a minimal generator, either $x_1 x_3^{\alpha_3-1}\in I^*$ or $x_2 x_3^{\alpha_3-1}\in I^*$. Because $x_1^2 x_3^{\alpha_3-1}$ is a minimal generator, the first option cannot happen. Hence $x_2 x_3^{\alpha_3-1}\in I^*$ and the local cohomology module is generated  as a vector space by $\Set{x_3^{\alpha_3-1},x_1x_3^{\alpha_3-1}}$.
    \end{enumerate}
\end{proof}

\begin{lem}
    \Label{BasicLemma}
     Suppose $(R,\frakm)$ is a Gorenstein numerical semigroup ring with embedding dimension $d=3$ and $\gr_\frakm(R)$ is $2$-Buchsbaum, then $\gr_\frakm(R)$ is indeed Cohen-Macaulay.
\end{lem}

\begin{proof}
   Suppose to the contrary that $\gr_\frakm(R)$ is not Cohen-Macaulay. Then $\alpha_2\ge 3$ and it follows from the previous proof that  $I^*$ has exactly one minimal generator of the form $x_1^{\gamma_1}x_3^{\gamma_3}$ with $\gamma_1,\gamma_3>0$. Furthermore, $\gamma_1=1$ or $2$, and $\alpha_3=\gamma_3+1$ or $\gamma_3+2$. Since $x_2^2 x_3^{\alpha_3-1}\in I^*$, there exists a generator $f=x_2^\beta x_3^\gamma -x_1^\alpha$ belonging to the binomial reduced standard basis of $I$ with $\beta\le 2$ and $\gamma \le \alpha_3-1$. Since $\gamma< \alpha_3$ and $\alpha_2\ge 3$, we have $\beta>0$ and this $f$ is not a new generator generated from the standard basis algorithm. Instead, it has to be one of the minimal binomial generators of $I$.

    By Theorem \ref{herzog-thm}, when $R$ is Gorenstein, the defining ideal, after a permutation $(i,j,k)$ of $(1,2,3)$, is
    \[
    I=(x_i^{\alpha_i}-x_j^{\alpha_j}, x_k^{\alpha_k}-x_i^{\alpha_{ki}}x_j^{\alpha_{ki}}).
    \]
    By symmetry, we can always assume that $i<j$.  Now one can  characterize when the associated graded ring is 2-Buchsbaum in terms of these $\alpha$'s.
    By our discussion for $x_2^\beta x_3^\gamma$, we only need the check the case where $(i,j,k)=(2,3,1)$, whence
    \[
    I=(f_1:=x_3^{\alpha_3}-x_2^{\alpha_2}, f_2:=x_1^{\alpha_{1}}-x_2^{\alpha_{12}}x_3^{\alpha_{13}}).
    \]
    It is evident that $f_1^*=x_3^{\alpha_3}$ and we can assume that $0\le \alpha_{13}< \alpha_3$, hence $\alpha_{12}>0$. Now $f_2^{*}=-x_2^{\alpha_{12}}x_3^{\alpha_{13}}$ and it is non-comparable with $f_1^*$. Applying the standard basis algorithm, we get $f_3:=\spoly(f_1,f_2)=-x_2^{\alpha_2 + \alpha_{12}}+x_1^{\alpha_1}x_3^{\alpha_3-\alpha_{13}}$, which must belong to the reduced standard basis of $I$.  Notice that  $I^*$ is perfect if and only if $\alpha_2 +\alpha_{12}\le \alpha_1 + \alpha_3 -\alpha_{13}$.  Since we have assumed that $I^*$ is not perfect, $f_3^*=x_1^{\alpha_{1}}x_3^{\alpha_{3}-\alpha_{13}}$. Now by our discussion above, $\alpha_{1}\le 2$. But if $G$ is minimally generated by 3 elements, then $\alpha_1>2$,  and this is a contradiction.  Thus, $\gr_\frakm(R)$ is Cohen-Macaulay.
\end{proof}

\begin{prop}
    \Label{2-Buch-4}
    Suppose $(R,\frakm)$ is a numerical semigroup ring of embedding dimension $3$ and $\gr_\frakm(R)$ is 2-Buchsbaum, then the initial form ideal $I^*$ is $4$-generated.
\end{prop}

\begin{proof}
    We may assume that $\gr_\frakm(R)$ is not Cohen-Macaulay. Hence by the proofs of Theorem \ref{length2} and Lemma \ref{BasicLemma}, we have $\alpha_2 \ge 3$ and $R$ is not Gorenstein. Now it suffices to show that $I$ has exactly one more standard basis element in addition to its $3$ minimal binomial generators.

    \begin{enumerate}[a]
        \item Suppose that $x_1x_3^{\alpha_3-2}$ is a minimal generator for $I^*$, then
            \[
            I=(f_1:=x_1^{\alpha_1}-x_2x_3^2, f_2:=x_2^{\alpha_2}-x_1x_3^{\alpha_3-2}, f_3:=x_3^{\alpha_3}-x_1^{\alpha_1-1}x_2^{\alpha_2-1})
            \]
            by case (a) of the proof for \ref{length2} together with Theorem \ref{herzog-thm}. We observe that $\spoly(f_1,f_3)$ and $\spoly(f_1,f_2)$ do not contribute to the standard basis.  Since $\gr_\frakm(R)$ is not Cohen-Macaulay, $\alpha_2 > 1+ (\alpha_3-2)$.

            Because all exponents are strictly positive, $\alpha_3-2\ge 1$. If $\alpha_3=3$, then $\overline{x_3}$ generates the local cohomology module and $x_2^2 x_3 \in I^*$. Thus there is a generator $f=x_2^\beta x_3 -x_1^\gamma \in I$ in the standard basis  with $\beta=1$ or $2$. Observe that $f_4:=\spoly(f_1,f_2)=-x_2^{\alpha_2+1}x_3+x_1^{\alpha_1+1}$. Since $n_2> n_1$ and $\alpha_2\ge 3>\beta$,  this will imply that $\gamma < \alpha_1$, which contradicts the choice of $\alpha_1$.

            Hence $\alpha_3  \ge 4$ and $f_4:=\spoly(f_2,f_1)=x_2^{\alpha_2+1}-x_1^{\alpha_1+1}x_3^{\alpha_3-4}$. By the 2-Buchsbaumness, $x_3^{\alpha_3-4}$ is not the generator for the local cohomology module and $x_2^{\alpha_2+1}$ has to be the leading monomial. The standard basis algorithm will stop at this step.

        \item Suppose that $x_1x_3^{\alpha_3-1}$ is a minimal generator for $I^*$. Then
            \[
            I=(f_1:=x_1^{\alpha_1}-x_2^{\alpha_{12}}x_3,f_2:=x_2^{\alpha_2}-x_1 x_3^{\alpha_{3}-1}, f_3:=x_3^{\alpha_3}-x_1^{\alpha_1-1}x_2^{\alpha_{2}-\alpha_{12}}),
            \]
            with $\alpha_{12}=1$ or $2$. The standard basis algorithm generates $f_4:=\spoly(f_2,f_1)=x_2^{\alpha_{2}+\alpha_{12}}-x_1^{\alpha_1+1} x_3^{\alpha_3-2}$. If the tangent cone is 2-Buchsbaum, then $\alpha_2+\alpha_{12}\le \alpha_1+\alpha_3-1$. And then the algorithm stops at this step.

        \item If $x_1^2 x_3^{\alpha_3-1}$ is a minimal generate for $I^*$, then by the proof for Theorem \ref{length2}, $\alpha_{12}=1$ and the defining ideal is
            \[
            I=(f_1:=x_1^{\alpha_1}-x_2 x_3, f_2:=x_2^{\alpha_2}-x_1^2 x_3^{\alpha_3-1},f_3:=x_3^{\alpha_3}-x_1^{\alpha_1-2}x_2^{\alpha_2-1}).
            \]
            Similar to the previous case, the standard basis algorithm will only contribute an additional basis element $f_4:=\spoly(f_2,f_1)=x_2^{\alpha_2+1}-x_1^{\alpha_1+2}x_3^{\alpha_3-2}$.
    \end{enumerate}

\end{proof}

\begin{ex}
\label{Buchrs-Converse}
    Let $K$ be a field, $R=K[ [t^5,t^6,t^{14}]]$ and $\frakm=(t^5,t^6,t^{14})R$. For this numerical semigroup ring,
the defining ideal
$
I=(x_1^4-x_2x_3,x_2^4-x_1^2x_3,x_3^2-x_1^2x_2^3)\subseteq K[ [x_1,x_2,x_3]]$
and  the initial form ideal $I^*=(x_2x_3,x_1^2x_3,x_3^2,x_2^5)\subseteq K[x_1,x_2,x_3]$, which is 4-generated. Let $\calM$ be the homogeneous maximal ideal of $\gr_\frakm(R)$.  Then the local cohomology module $H_\calM^0(\gr_\frakm(R))$ is generated by the element $\overline{x_3}$ in $\gr_\frakm(R)$ as an Artinian $R$-module. It is also generated by the elements $\overline{x_3}$ and $\overline{x_1x_3}$ as a $K$-vector space. Therefore, $\gr_\frakm(R)$ is $2$-Buchsbaum, but not Buchsbaum.
\end{ex}

\begin{prop}
    \Label{Buchrs}
    Suppose $(R,\frakm)$ is a numerical semigroup ring of embedding dimension $3$. If $\gr_\frakm(R)$ is Buchsbaum, but not Cohen-Macaulay, then for the principal reduction ideal $Q=(t^{n_1})R$ of $\frakm$ and the $\alpha$-invariant $\alpha_2$, $\rmr_Q(\frakm)=\alpha_2=\rms_Q(\frakm)+1$.
\end{prop}

\begin{proof}
    Let $r=\rmr_Q(R)$ be the reduction number and $s=\rms_Q(R)$ the index of nilpotency.
    Since $\gr_\frakm(R)$ is Buchsbaum,  by Lemma \ref{principal}, $H_\calM^0(\gr_\frakm(R))$ is generated by $\overline{x_3^{\alpha_3-1}}$, $\alpha_{12}=\alpha_{21}=1$ and $\alpha_{23}=\alpha_3-1$. Now by Theorem \ref{herzog-thm}, $\alpha_{13}=1$,  $\alpha_{31}=\alpha_1-1$ and $\alpha_{32}=\alpha_2-1$. Thus the defining ideal $I$ has the following standard basis:
    \begin{enumerate}[a]
        \item $f_1:=x_1^{\alpha_1}-x_2 x_3$ with $\alpha_1\ge 3$,
        \item $f_2:=x_2^{\alpha_2}-x_1 x_3^{\alpha_3-1}$ with $\alpha_2 \ge \alpha_3+1$,
        \item $f_3:=x_3^{\alpha_3}-x_1^{\alpha_1-1} x_2^{\alpha_2-1}$ with $\alpha_3 \le \alpha_1 + \alpha_2 -3$,
        \item $f_4:=x_2^{\alpha_2+1}-x_1^{\alpha_1+1}x_3^{\alpha_3-2}$ with $\alpha_2\le \alpha_1+\alpha_3-2$.
    \end{enumerate}
    The inequality in case (d) follows from the fact that $I$ has only $4$ standard basis, hence the standard basis algorithm has to stop after generating $f_4$.
    Since $\alpha_2\ge \alpha_3+1$, by using the binomials $f_1,f_3$ and $f_4$, it is straightforward to show that $(\overline{x_2},\overline{x_3})^{\alpha_2+1}\subseteq \overline{x_1} \frakm^{\alpha_2}$. Hence $\frakm^{\alpha_2+1}=\overline{x_1}\frakm^{\alpha_2}$ and  $r\le \alpha_2$. On the other hand, it follows from the definition of $\alpha_2$ that $\overline{x_2^{\alpha_2-1}}\not\in (\overline{x_1})$. Hence $s\ge \alpha_2-1$.  Since $\gr_\frakm(R)$ is not Cohen-Macaulay, Theorem \ref{CM-cond-2} implies that $r>s$. These three inequalities lead to $r=\alpha_2=s+1$.
\end{proof}

\begin{ex}
    \Label{Buch-Ex}
    Let $K$ be a field, $R=K[ [t^5,t^6,t^{19}]]$, $Q=(t^5)R$ and $\frakm=(t^5,t^6,t^{19})R$. The defining ideal is
    \[
    I=(x_1^5-x_2 x_3, x_2^4 - x_1 x_3, x_3^2-x_1^4 x_2^3) \subseteq K[[x_1,x_2,x_3]].
    \]
    The initial form ideal is $I^*=(x_2^5,x_3^2,x_2 x_3, x_1 x_3) \subseteq K[x_1,x_2,x_3]$. For the homogeneous maximal ideal $\calM$ of $\gr_\frakm(R)$, the local cohomology module $H_\calM^0(\gr_\frakm(R))$ is generated by the element $\overline{x_3}$ in $\gr_\frakm(R)$ as a $K$-vector space. Hence $\gr_\frakm(R)$ is Buchsbaum, but not Cohen-Macaulay. We have $\rmr_Q(\frakm)=\alpha_2=4$ and $\rms_Q(\frakm)=3$.
\end{ex}

\begin{ex}
  The converse of Proposition \ref{Buchrs} is not true. In Example \ref{Buchrs-Converse}, $Q=(t^5)R$ is a principal reduction of the maximal ideal $\frakm=(t^5,t^6,t^{14})R$, satisfying $\rmr_Q(\frakm)=\alpha_2=4$ and $\rms_Q(\frakm)=3$. However, the tangent cone $\gr_\frakm(R)$ is $2$-Buchsbaum, but not Buchsbaum.
\end{ex}

At the end of this subsection, we give another characterization of Buchsbaumness of the tangent cone $\gr_\frakm(R)$, with a different flavor from that of Theorem \ref{length1}.

Let $G=\ideal{n_1,\dots,n_d}$ be a general numerical semigroup with multiplicity $e=n_1$. If the associated semigroup ring is $(R,\frakm)$ with $\Quot(R)$ being  the total quotient ring and $r$ being the reduction number of $\frakm$,
 then the numerical semigroup $G'$ of the blowup ring $R':=\bigcup_{n\ge 1}(\frakm^n :_{\Quot(R)} \frakm^n) = (\frakm^r:_{\Quot(R)} \frakm^r)$ is $\ideal{n_1,n_2-n_1,n_3-n_1,\dots,n_d-n_1}$ (cf.~\citet[Section 3.3]{MR2265800}).

Let $\Ap(G,e)=\Set{w_0,\dots,w_{e-1}}$ be the {\Apery} set of $G$ with respect to $e$, where $w_i$ is the smallest element in $G$ congruent to $i$ modulo $e$. Similarly, let $\Ap(G',e)=\Set{w_0',\dots,w_{e-1}'}$. Furthermore, let $M=G\setminus \Set{0}$ be the maximal ideal of the semigroup $G$. In \citet{MR2256395} and \citet{DANNA2009} the following invariants for $G$ were defined. For each $i=0,1,\dots,e-1$, let $a_i=(w_i-w_i')/e$, $b_i=\max\Set{n\mid w_i\in nM}$, $c_i=\min\Set{n\mid w_i' \in nM - ne}$ and $d_i=\min\Set{n\mid w_i' \in nM -nM}$. All these invariants are non-negative integers.

\begin{thm}
    [{\cite[Theorem 2.6]{MR2256395}}]
    \label{CMChar}
    The tangent cone $\gr_\frakm(R)$ is Cohen-Macaulay if and only if $a_i=b_i$ for each $i=0,1,\dots,e-1$.
\end{thm}

\begin{prop}
    [{\cite[Proposition 3.5]{DANNA2009}}]
    \label{Invariants}
 We always have   $b_i\le a_i \le c_i \le d_i \le r$, where $r$ is the reduction number of the maximal ideal. Moreover, $b_i < a_i$ if and only if $a_i<c_i$.
\end{prop}

\begin{thm}
    [{\cite[Theorem 3.8]{DANNA2009}}]
    \label{Buch-Suff}
    Suppose $d_i=a_i+1$ for every $i$ such that $a_i>b_i$. Then $\gr_\frakm(R)$ is Buchsbaum.
\end{thm}

We want to show that the condition in Theorem \ref{Buch-Suff} is also necessary when the embedding dimension $d=3$.

\begin{thm}
    \label{Buch-Another-Char}
    Let $(R,\frakm)$ be a numerical semigroup ring of embedding dimension $3$. Then the associated graded ring $\gr_\frakm(R)$ is Buchsbaum if and only if $d_i=a_i+1$ for every $i$ such that $a_i>b_i$.
\end{thm}

\begin{proof}
    By Theorems \ref{CMChar} and \ref{Buch-Suff}, we may assume that $\gr_\frakm(R)$ is Buchsbaum, but not Cohen-Macaulay, and prove that $d_i=a_i+1$ for every $i$ such that $a_i>b_i$.

    Let $M$ be the maximal ideal of the semigroup $G$, $e=n_1$ the multiplicity of $R$ and $r$ the reduction number of $\frakm$. From the discussion in \citet[Section 3]{DANNA2009} we know that the blowup semigroup $G'=rM-re$. Furthermore, Remark 3.3 of \citet{DANNA2009} says that $a_i>b_i$ if and only if there exists $s'\equiv {i} \pmod{e}$ in $G'$ such that $s'+(h+1)e \in hM \setminus (h+1) M$ for some non-negative integer $h$.  Since $s'\in G'$, $s'+rM\subseteq rM$. Hence if $s'+(h+1)e \in hM \setminus (h+1) M$, then $s'+(h+1)e + r M \subseteq (h+1+r)M$, thus $\overline{t^{s'+(h+1)e}}:=t^{s'+(h+1)e}+ \frakm^{h+1} \in H_\calM^0(\gr_\frakm(R))$.

    Recall that $\alpha_3=\min\Set{\alpha \in \NN \mid \alpha  n_3\in \ideal{n_1,n_2}, \alpha \ne 0}$. Since $G$ is three-generated and $\gr_\frakm(R)$ is Buchsbaum, Lemma \ref{principal} shows that  $H_\calM^0(\gr_\frakm(R))$ is the $R/\frakm$-vector space generated by $\overline{x_3^{\alpha_3-1}}$. Whence $s'+(h+1)e = (\alpha_3-1)n_3$.  For this reason, there exists a unique $s'\in G'$ such that $s'+(h+1)e \in h M \setminus (h+1)M$ for some $h\in \NN$, and if $a_i>b_i$, then $s'\equiv i \pmod{e}$.  Fix this $i$. By virtue of Proposition \ref{Invariants}, now it suffices to show that $a_i+1=r$.

    Let $\alpha_2$ be the invariant that is defined similarly to $\alpha_3$. Proposition \ref{Buchrs} shows that $r=\alpha_2$.   By the definition of $\alpha_3$, $t^{(\alpha_3-1)n_3}\not \in (t^{n_1})R$, hence the {\Apery} element $w_i$ equals $(\alpha_3-1)n_3$. Notice that $a_i<r$. Hence, in order to show that $a_i=r-1$, it suffices to show that $w_i-(r-1)e\in G'=rM -re$, or equivalently, $(\alpha_3-1)n_3 + e \in \alpha_2 M$. But this follows from the binomial $f_2=x_2^{\alpha_2}-x_1 x_3^{\alpha_3-1}$ in the proof of Proposition \ref{Buchrs}.
\end{proof}

\begin{ex}
   Assume the notation in Example \ref{Buch-Ex}. We have already known that $\rmr(\frakm)=4$ and $\gr_\frakm(R)$ is Buchsbaum. For the semigroup $G=\ideal{5,6,19}$, the {\Apery} set is $\Ap(G,5)=\Set{0,6,12,18,19}$. The blowup semigroup is $G'=\Set{0,1,\rightarrow}$, hence $\Ap(G',5)=\Set{0,1,2,3,4}$. The invariants are $a=\Set{0,1,2,3,3}$, $b=\Set{0,1,2,3,1}$, $c=\Set{0,1,2,3,4}$ and $d=\Set{0,4,4,4,4}$. Notice that $a_i>b_i$ if and only if $i=4$, and $d_4=a_4+1$.
\end{ex}

\begin{rem}
    The numerical semigroup $G=\ideal{10,17,23,82}$ given by \citet[Remark 3.9]{DANNA2009} shows that Theorem \ref{Buch-Another-Char} fails if we allow the embedding dimension to be $4$.
\end{rem}

We conclude this paper by an additional remark.

\begin{rem}
The standard basis method in this paper turns out to be less fruitful if the embedding dimension $d\ge 4$. However, when $d=4$ and the tangent cone $\gr_\frakm(R)$ is Gorenstein, we are able to provide further insights with the help of linkage theory. For instance, the initial form ideal $I^\ast$ satisfies $\mu(I^*)\le 5$. This echoes a result of \citet{MR0414559}: for every Gorenstein numerical semigroup ring $(R,\frakm)$ of embedding dimension $4$, the defining ideal $I$ satisfies $\mu(I)\le 5$. Detailed discussion is available in \citet{Shen-Thesis}.
\end{rem}

\section*{Acknowledgement}
This paper is part of my Ph.D. thesis at Purdue University, which was written under the supervision of Professor Bernd Ulrich. I want to express my sincere gratitude to Professor Bernd Ulrich for the advising, encouragement and inspiration. I am also grateful to Dr.~Lance Bryant and Professor William Heinzer for the stimulating comments during the preparation of this work. I want to acknowledge the support provided by {\tt GAP} \cite{GAP4} and {\sc Singular} \cite{GPS}. In addition, I thank Dr.~Lance Bryant for bringing to my attention the research of V.~Sapko, and for his {\sc Singular} library that facilitates the computations of initial form ideals. Finally, I thank the referee for the careful reading and valuable comments and suggestions.

\end{document}